\DeclareMathAlphabet\mathbfcal{OMS}{cmsy}{b}{n}
\newcommand{\vect}[1]{\boldsymbol{\bm{#1}}}
\newcommand{\mat}[1]{\boldsymbol{\mathbf{#1}}}
\newcommand{\diff}{\, \mathrm d}
\newcommand{\divs}{\vect \nabla_\Gamma \cdot}
\newtheorem{proposition}{Proposition}
\newtheorem{corollary}{Corollary}
\begin{document}
%
\title{High-order quasi-Helmholtz Projectors: 
\\ Definition, Analyses, Algorithms}


%
%
%

\author{Johann~Bourhis,        Adrien~Merlini,~\IEEEmembership{Member,~IEEE,}
        and~Francesco~P.~Andriulli,~\IEEEmembership{Fellow,~IEEE}
\thanks{J.~Bourhis and F. P. Andriulli are with the Department of Electronics
and Telecommunications, Politecnico di Torino, 10129 Torino, Italy; e-mail:
francesco.andriulli@polito.it.}
\thanks{A. Merlini is with the Microwave department, IMT Atlantique, 29238
Brest cedex 03, France; e-mail: adrien.merlini@imt-atlantique.fr.}
\thanks{Manuscript received August 25, 2023.}}

%
%

\markboth{}%
{}
%



\maketitle

\begin{abstract}
The accuracy of the electric field integral equation (EFIE) can be substantially improved using high-order discretizations. 
However, this equation suffers from ill-conditioning and deleterious numerical effects in the low-frequency regime, often jeopardizing its solution. 
This can be fixed using quasi-Helmholtz decompositions, in which the source and testing elements are separated into their solenoidal and non-solenoidal contributions, then rescaled in order to avoid both the low-frequency conditioning breakdown and the loss of numerical accuracy.
However, standard quasi-Helmholtz decompositions require handling discretized differential operators that often worsen the mesh-refinement ill-conditioning and require the finding of the topological cycles of the geometry, which can be expensive when modeling complex scatterers, especially in high-order.
This paper solves these drawbacks by presenting the first extension of the quasi-Helmholtz projectors to high-order discretizations and their application to  the stabilization of the EFIE when discretized with high-order basis functions.
Our strategy will not require the identification of the cycles and will provide constant condition numbers for decreasing frequencies. Theoretical considerations will be accompanied by numerical results showing the effectiveness of our method in complex scenarios.
\end{abstract}

\begin{IEEEkeywords}
Boundary element method, electric field integral equation, high-order, quasi-Helmholtz projectors, low-frequency, preconditioning.
\end{IEEEkeywords}

%
\IEEEpeerreviewmaketitle

\section{Introduction}
%
%
%
%
\IEEEPARstart{M}{odelling} and simulation of electromagnetic scattering from perfectly electrically conducting (PEC) objects can be effectively performed using surface integral equations~\cite{Chew2001FastAE, gibson_2021}. Among these formulations, the electric field integral equation (EFIE) is one of the most widely used.
This equation is usually solved via the boundary element method (BEM)~\cite{sauter2010boundary} by approximating the current as a combination of basis functions with a finite support defined on a surface mesh of the object.
The accuracy of the method consequently depends on the ability of the mesh and basis functions to respectively describe the surface and the current functional space.
Very often, zeroth-order basis functions such as the Rao-Wilton-Glisson (RWG) functions are employed with flat triangular cells~\cite{rao-wilton-glisson-1982}.
Thus, the mesh density and the functional space discretization are usually increased simultaneously by refining the mesh, which leads to a higher number of cells and basis functions.

Alternatively, high-order mesh and functional space discretizations~\cite{graglia_higher_1997,graglia_higher-order_2015} can be employed to improve the accuracy without necessarily increasing the mesh density. 
High-order basis functions also provide a faster convergence to the physical solution when refining the mesh~\cite{djordjevic_double_2004, Peterson_error}.
Nevertheless, despite the use of a more accurate framework, the EFIE suffers from ill-conditioning and loss of significant digits at low-frequency~\cite{Adrian2021}.

The stabilization of the EFIE using quasi-Helmholtz decomposition~\cite{Vecchi1999LoopstarDO,Chew2000,Lee2003,Eibert} is well-known, even in the high-order case~\cite{wildman_accurate_2004,valdes_high-order_2011}.
It consists in a change of basis that allows to reorganize the system into solenoidal and non-solenoidal contributions and to rescale them appropriately to cure the problematic behavior of the EFIE at low-frequency.
However, the construction of the solenoidal basis functions can be burdensome because it requires the identification of the global cycles;
a task that might be challenging when modeling complex geometries~\cite{Adrian2021}.

More recently, the method of the quasi-Helmholtz projectors has been developed for the RWG case~\cite{Adrian2021}. 
The method generates orthogonal projectors over the solenoidal and non-solenoidal subspaces from the computation of the Star matrix, without having to explicitly identify the cycles. 
They are subsequently used to separate and to rescale the different contributions of the EFIE, without further degrading the condition number when increasing the mesh-density~\cite{Adrian2021}.

This work proposes for the first time the high-order counterpart of the quasi-Helmholtz projectors for the stabilization of the EFIE when using high-order basis functions. The contribution will first present a general definition of the Star basis functions in high-order that encompasses several choices in terms of basis elements and testing in the charge space. The contribution will then show that the consequential definition of the quasi-Helmholtz projectors is not dependent on any of these specific choices. Finally the new projectors will be used to regularize the electric field integral equation discretized with high-order elements.

This paper is organized as follows. 
In Section~\ref{sec:background}, we set the background and the notations.
In section~\ref{contribution}, we extend the definition of the quasi-Helmholtz projectors to the high-order framework. We show their completeness to represent the Graglia-Wilton-Peterson (GWP) basis~\cite{graglia_higher_1997} and their uniqueness with respect to the choice of the Star basis, as well as their application to solve the low-frequency breakdown.
In section~\ref{implementation}, we present the implementation details required to achieve effective algorithms. 
Finally, section~\ref{numerical-results} provides numerical results which validate the preconditioning technique on relevant scenarios.
The results of this work were presented in a conference contribution~\cite{URSIGASS2023} that generalized the preliminary investigations in~\cite{merlini2019unified}.

\section{Background and Notations}\label{sec:background}

This section will introduce the necessary background material and notation on integral equations and related high-order discretizations. The treatment will be brief and with the primary goal of setting the notations. The interested reader is referred to the more extensive treatments in~\cite{graglia_higher_1997, Peterson2005, Nedelec1980, Brezzi2013, djordjevic_double_2004} and references therein.

Consider the scattering from a PEC object with boundary $\Gamma$ residing in a homogeneous medium with wavenumber $k$ and characteristic impedance $\eta$. The current $\vect J$ induced on $\Gamma$ by an incident electric field $\vect E^{\mathrm{i}}$ can be obtained by solving the EFIE~\cite{gibson_2021}
\begin{equation}\label{EFIE}
    \mathcal{T} \vect J =  -\widehat{\vect n} \times \dfrac{1}{\eta} \vect E^{\mathrm{i}},
\end{equation}
with
\begin{equation}
    \mathcal T = j k \, \mathcal T_s - \frac{1}{j k} \, \mathcal T_h,\\
\end{equation}
where the vector and scalar potentials are defined as
\begin{gather}
    \left( \mathcal T_s \vect J \right)(\vect r) = \widehat{\vect n} \times \int_{\Gamma} G(\vect{r},\vect{r'}) \vect J(\vect{r'}) \diff S(\vect r'), \\
    \left(    \mathcal T_h \vect J \right)(\vect r) =\widehat{\vect n} \times \vect \nabla_\Gamma \int_{\Gamma} G(\vect{r},\vect{r'}) \vect \nabla_\Gamma \cdot \vect J(\vect{r'}) \diff S(\vect r'),
\end{gather}
and where $\widehat{\vect n}$ is the outgoing normal vector from $\Gamma$, $\vect \nabla_\Gamma$ is the surface nabla operator, and the Green's kernel is 
\begin{equation}
    G(\vect r, \vect r') = \dfrac{e^{-j k |\vect r-\vect r'|}}{4 \pi |\vect r-\vect r'|}.
\end{equation}

The solution  $\vect J$  of \eqref{EFIE} lives in
the functional space $H^{-\frac{1}{2}}_{\mathrm{div}}\left(\Gamma\right)$, for which a  well-suited arbitrary order discretization  approach is given by the Nédélec's mixed-order div-conforming spaces~\cite{Nedelec1980, Brezzi2013}.
These spaces can be generated by different sets of basis functions\cite{graglia_higher_1997, notaros_higher_2008, Peterson2005}. 
In the following, for fixing ideas, we will focus on the Graglia-Wilton-Peterson (GWP) basis function set, but our considerations and findings will apply also to several other bases.
The $p$-th order GWPs are defined as the product of the RWGs (zeroth-order GWPs) with order $p$ shifted Silvester-Lagrange functions~\cite{graglia_higher_1997}.
In what follows, we denote by $\{\vect \psi_n^{(p)}\}_{n=1}^{N_p}$ the set of GWP basis functions, where the total number of functions is 
\begin{equation}\label{numGWPs}
    N_p = (p+1) E_\mathrm{int} + p (p+1) C,
\end{equation}
with $E_\mathrm{int}$ and $C$ the number of internal edges and cells of the mesh. A detailed definition of these basis functions is omitted here for space limitation, but we refer the reader to \cite{graglia_higher_1997} and \cite{Peterson2005}.\color{black}

\begin{color}{black}
\color{black}The space spanned by the GWPs' divergences is included into the space of cell-wise polynomials complete up to order $p$~\cite{graglia_higher-order_2015}, \color{black}that is of dimension 
\begin{equation}
    M_p = \dfrac{(p+1)(p+2)}{2} C,
\end{equation}
and that we denote by $\mathbb P(\Gamma,p)$ in what follows.
We consider on each triangle a set of $\frac{(p+1)(p+2)}{2}$ interpolatory points, which give a total set of points $\{\vect r_j\}_{m=1}^{M_p}$ for the whole mesh \color{black}(with overlapping for the points defined on the triangles' boundaries).
On these points, we define a Lagrange interpolatory basis 
\color{black}for $\mathbb P(\Gamma,p)$ that we denote by $\{\sigma^{(p)}_m\}_{m=1}^{M_p}$. 
More explicitly we have
\begin{equation}\label{eq:sigmadef}
    \sigma_m(\vect r_j) = \delta_{mj},
\end{equation}
\color{black} with $\delta_{mj}$ the Kronecker symbol and $\sigma_m$ is a piecewise continuous function which is polynomial of degree $p$ on one cell and zero on all the others. 

\color{black} Because of charge neutrality, the total number of degrees of freedom (DoFs) for the divergence space (charge DoFs)
is $M_p - N_\mathrm{bodies}$~\cite{wildman_accurate_2004}, where $N_\mathrm{bodies}$ is the number of separate connected bodies of the mesh. 
The GWP space can be decomposed as a 
combination of non-solenoidal functions, corresponding to the charge DoFs, and a combination of solenoidal (divergence free) functions that complement the charge DoFs.
\end{color}

The GWP functions $\{\vect \psi^{(p)}_n\}$ can be used within a BEM strategy, by approximating 
the current in \eqref{EFIE} as 
\begin{equation}\label{GWPsCombination}
    \vect J(\vect r) \approx \sum_{n=1}^{N_p} j_n \vect \psi^{(p)}_n(\vect r)
\end{equation} 
and, after testing \eqref{EFIE} with the rotated GWP elements~$\{ \widehat{ \vect n} \times \vect \psi^{(p)}_n\}$, the EFIE yields a matrix system~
\begin{equation}
    \mat T \vect j = \vect e,
\end{equation}
with
\begin{equation}
    \mat T = j k \, \mat T_s + \frac{1}{j k} \, \mat T_h\,, \label{system-EFIE}
\end{equation}
where
\begin{gather}
    \big[\mat T_s\big]_{mn} = \int_{\Gamma \times \Gamma} \hspace{-0.45cm} G(\vect{r},\vect{r'}) \vect\psi^{(p)}_n(\vect{r'}) \cdot \vect\psi^{(p)}_m(\vect{r}) \diff {S(\vect r')} \mathrm d {S(\vect r)},\label{Ts-matrix} \\
    \big[\mat T_h\big]_{mn} = \int_{\Gamma \times \Gamma} \hspace{-0.45cm} G(\vect{r},\vect{r'}) \vect{\nabla}_\Gamma \cdot \vect\psi^{(p)}_n(\vect{r'}) \vect{\nabla}_\Gamma \cdot \vect\psi^{(p)}_m(\vect{r}) \diff {S(\vect r')} \mathrm d {S(\vect r)},\label{Th-matrix} \\
    \big[\vect e \big]_m = -\frac{1}{\eta} \int_\Gamma \vect{E}^{\mathrm{i}} (\vect{r}) \cdot \vect\psi^{(p)}_m (\vect{r}) \diff {S(\vect r)}.
\end{gather}




At low-frequency, the EFIE faces  several numerical challenges. 
First, the EFIE linear system becomes increasingly ill-conditioned for decreasing frequencies due to the frequency ill-scaling of vector and scalar potentials~\cite{Adrian2021}. 
In particular
\begin{equation}
    \lim_{k \rightarrow 0} \mathrm{cond}(\mat T) = \mathcal O(k^{-2}).
\end{equation}
This ill-conditioning impacts the accuracy of the solution and increases the number of iterations required by iterative solvers.
A second numerical challenge is related to the loss of significant digits in the context of finite precision computations when evaluating the right-hand-side $\vect e$, the solution $\vect j$ and the radiated field~\cite{Adrian2021}.

Both effects are related to the quasi-Helmholtz decomposition of the current, because solenoidal and non-solenoidal contributions in the EFIE system have different frequency behaviors.
By decomposing the source and testing elements into their solenoidal and non-solenoidal components, it is possible to properly rescale these contributions to cure both of these issues. In the zeroth-order case, this strategy can be applied in a particularly effective way by leveraging quasi-Helmholtz projectors \cite{Adrian2021}. Differently from other quasi-Helmholtz decompositions such as Loop-Star/Tree and related approaches, this mathematical tool allows for the rescaling of scalar and vector potential without perturbing the other conditioning properties of the equation. The generalization of quasi-Helmholtz projector strategies to the high-order case, however, is far from trivial because of the need for a proper extension of the graph Laplacian matrices~\cite{Adrian2021} to the high-order case. Such a generalization will be the subject of Section~\ref{contribution}.



\section{High-Order Quasi-Helmholtz Projectors}\label{contribution}

In this section we will define high-order quasi-Helmholtz projectors to  address the above described low-frequency limitations of the EFIE in the high-order case. From the zeroth-order case \cite{Adrian2021} we learn that primal projectors can be obtained from a properly chosen \emph{Star matrix}.
The problem is that a unique definition of a Star matrix in high-order can be challenging~\cite{wildman_accurate_2004}. In this contribution, we will propose one approach to define a Star matrix that, indeed, does not lead to a unique definition. We will equally show in this paper, however, that the corresponding Star (non-solenoidal) projector will be invariant, regardless of the non-uniqueness of the Star definition we will adopt.

\subsection{Construction of the high-order quasi-Helmholtz projectors}

Given the coefficients $\vect j$ of a function expressed as a linear combination of GWP functions, one could think of building a high-order Loop-Star decomposition in the form
\begin{equation} \label{eq:dishelmholtz}
    \vect j= \mat \Sigma_p \vect s+\mat \Lambda_p \vect l + \mat H_p \vect h.
\end{equation}
Similarly to the zeroth-order case $\mat \Lambda_p$ and $\mat H_p$ express the local-Loops-to-GWP and global-cycles-to-GWP change of bases, respectively \cite{Adrian2021}. 
In the following we will not need an explicit definition of these two matrices and we will not discuss them further. We will only be using the fact that both $\mat \Lambda_p$ and $\mat H_p$ contain coefficient representations of solenoidal functions. 
As pertains to $\mat \Sigma_p$, we will refer to this matrix as the high-order \emph{Star} matrix, with an abuse of terminology steming from the zeroth-order case. Consider now the injective linear function $\mathcal L$
\begin{equation}\label{eq:defofL}
    \mathcal L\colon \begin{array}{>{\displaystyle}r @{} >{{}}c<{{}} @{} >{\displaystyle}l} 
          &\mathbb P(\Gamma,p) \longrightarrow& \mathbb R^m \\ 
          &\quad q(\vect r) \;\;  \longmapsto& \mathcal L q  
         \end{array}
         \end{equation}
with $[\mathcal L q]_m=\mathcal L_m q$. It should be noted that injectivity and linearity imply
\begin{equation}\label{eq:extraproperty}
\mathcal L q = 0 \iff q(\vect r) = 0 \quad \forall \vect r \in \Gamma. 
\end{equation}
Now we can propose the following general definition for $\mat \Sigma_p$
\begin{equation}\label{sigma-definition}
    \left[\mat \Sigma_p\right]_{i,j} =[\mathcal L \divs \vect \psi^{(p)}_i]_j
\end{equation}
By the construction of $\mat \Sigma_p$ and because $\mat \Lambda_p$ and $\mat H_p$ describe solenoidal functions we get
\begin{equation}
\mat \Sigma_p^T\mat \Lambda_p=\mat 0 
\end{equation}
and 
\begin{equation}
\mat \Sigma_p^T\mat H_p=\mat 0.
\end{equation}
The above conditions result in that all coefficients of solenoidal functions are in the null-space of $\mat \Sigma_p^T$. Also the converse statement, all elements of the null space of $\mat \Sigma_p^T$ are coefficients of solenoidal functions, is true following from \eqref{eq:extraproperty} and \eqref{sigma-definition}.
Since the solenoidal subspace of the GWPs has a dimension $N_p - M_p + N_{\mathrm{bodies}}$ \cite{wildman_accurate_2004}, the null-space of $\mat \Sigma_p^T$ has dimension $N_p - M_p + N_{\mathrm{bodies}}$. As a consequence, the dimension of the range of $\mat \Sigma_p^T$, which is also the dimension of the range of $\mat \Sigma_p$, equals to $N_p-(N_p - M_p + N_{\mathrm{bodies}})=M_p - N_{\mathrm{bodies}}$.
This fact proves that $\left[ \mat \Sigma_p, \mat \Lambda_p,  \mat H_p\right]$, a rectangular matrix, has a rank equal to the number of basis functions $N_p$ in \eqref{numGWPs}. This shows that our definition of the high-order Stars $\mat \Sigma_p^T$ produces a valid complement of the solenoidal subspaces as the columns of $\left[ \mat \Sigma_p, \mat \Lambda_p,  \mat H_p\right]$  generate the entire GWP space.

We are now ready to define the quasi-Helmoltz projectors. In particular, mimicking the zeroth-order case, define
\begin{equation}\label{p-nsol-proj}
    \mat P_p^{\Sigma} = \mat \Sigma_p \left(\mat \Sigma_p^T \mat \Sigma_p\right)^+ \mat \Sigma_p^T
\end{equation}
and
\begin{equation}\label{p-sol-proj}
    \mat P_p^{\Lambda H} = \mat I - \mat P_p^\Sigma.
\end{equation}
It should be noted that, because of the orthogonality of the projectors, i.e.
\begin{equation}\label{orthogonality}
    \mat P_p^{\Sigma}\mat P_p^{\Lambda H}=\mat P_p^{\Sigma}(\mat I - \mat P_p^\Sigma)=\mat P_p^{\Lambda H}\mat P_p^{\Sigma}=\mat P_p^{\Sigma}-\mat P_p^{\Sigma}=\mat 0,
\end{equation}
and because of completeness of the decomposition, $\mat P_p^{\Lambda H}$ is the projector in the solenoidal (local and global Loops) subspace.

From the generality of our definition of $\mat\Sigma_p$ in \eqref{sigma-definition} one could think of obtaining a different set of projectors for each specific choice
$\mat\Sigma_p$. We will now prove that, instead, the high-order quasi-Helmholtz projectors, with our definition, are invariant under any particular choice of the high-order Star matrix. In particular
\begin{proposition}
Consider  two different operators $\mathcal L$ and $\widetilde {\mathcal L}$ both satisfying definition and properties in \eqref{eq:defofL} with associated matrices $\mat \Sigma_p$ and $\widetilde{\mat \Sigma}_p$, respectively defined via \eqref{sigma-definition}, then there exists an invertible matrix $\mat M$ so that $\widetilde{\mat \Sigma}_p =\mat \Sigma_p \mat M$.
\end{proposition}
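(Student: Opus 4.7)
The plan is to pass through a common coordinate system on $\mathbb P(\Gamma,p)$ so that both Star matrices factor through a single GWP-dependent matrix, after which the existence of $\mat M$ reduces to an algebraic statement about two injective linear representations of the same finite-dimensional space. I take the statement to carry the implicit assumption that $\mathcal L$ and $\widetilde{\mathcal L}$ share the same codomain $\mathbb R^m$, as required for $\mat M$ to be square.

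First, using the Lagrange basis $\{\sigma_n^{(p)}\}_{n=1}^{M_p}$ from \eqref{eq:sigmadef}, I would expand each GWP divergence as $\divs \vect \psi_i^{(p)} = \sum_{n=1}^{M_p} d_{i,n}\,\sigma_n^{(p)}$ and collect the coefficients into a matrix $\mat D \in \mathbb R^{N_p\times M_p}$ that depends only on the GWP family. I would then represent $\mathcal L$ and $\widetilde{\mathcal L}$ by matrices $\mat A,\widetilde{\mat A}\in\mathbb R^{m\times M_p}$ whose $n$-th columns are $\mathcal L\sigma_n^{(p)}$ and $\widetilde{\mathcal L}\sigma_n^{(p)}$. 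Linearity yields $\mathcal L q = \mat A\vect c$ for any $q=\sum_n c_n\sigma_n^{(p)}$, and substituting $q=\divs\vect \psi_i^{(p)}$ into \eqref{sigma-definition} produces the factorisations
\begin{equation*}
\mat \Sigma_p = \mat D\,\mat A^T,\qquad \widetilde{\mat \Sigma}_p = \mat D\,\widetilde{\mat A}^T.
\end{equation*}
The injectivity clause \eqref{eq:extraproperty}, applied to each operator, forces $\mat A$ and $\widetilde{\mat A}$ to have full column rank $M_p$, equivalently $\mat A^T$ and $\widetilde{\mat A}^T$ surject onto $\mathbb R^{M_p}$.

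The proposition then reduces to producing an invertible $\mat M\in\mathbb R^{m\times m}$ obeying $\mat A^T\mat M = \widetilde{\mat A}^T$, since left-multiplying by $\mat D$ immediately gives $\widetilde{\mat \Sigma}_p = \mat \Sigma_p\,\mat M$. When $m=M_p$, $\mat A^T$ is square and invertible and $\mat M=(\mat A^T)^{-1}\widetilde{\mat A}^T$ closes the argument. For $m>M_p$, the kernels $\ker\mat A^T$ and $\ker\widetilde{\mat A}^T$ share the common dimension $m-M_p$, so I would pick complements $W,\widetilde W\subset\mathbb R^m$ on which the two transposes restrict to isomorphisms onto $\mathbb R^{M_p}$, and define $\mat M$ to act as $(\mat A^T|_W)^{-1}\widetilde{\mat A}^T|_{\widetilde W}$ on $\widetilde W$ and as any fixed linear bijection $\ker\widetilde{\mat A}^T\to\ker\mat A^T$ on the remaining summand. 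The main obstacle I anticipate sits precisely in this last step: a naive pseudoinverse candidate satisfies the required identity but is generically rank-deficient, so invertibility must be engineered by hand via a freely chosen isomorphism between the two kernels, while everything else is routine linear-algebraic bookkeeping once the $\mat D$-factorisation is in place.
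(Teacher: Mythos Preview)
Your approach mirrors the paper's: both factor each Star matrix as a GWP-only divergence-coefficient matrix times a coordinate matrix for $\mathcal L$ on a fixed basis of $\mathbb P(\Gamma,p)$, then use injectivity of $\mathcal L$ to invert the coordinate factor; your $\mat D$ is the paper's $\mat A$, your $\mat A^T$ is the paper's $\mat L$, and for $m=M_p$ both proofs land on $\mat M=\mat L^{-1}\widetilde{\mat L}$. The one place you add something is the case $m>M_p$: the paper shows only that $\mat L^T$ has trivial kernel and then calls $\mat L$ ``invertible'', which tacitly assumes the codomain dimension equals $M_p$; your kernel-to-kernel extension is the correct patch to produce a genuinely invertible $\mat M$ in the overdetermined setting and is a small but real improvement over the paper's argument at that step.
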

\begin{proof}
Consider an arbitrary basis $\left\{ b_k \right\}_{k=1}^{M_p}$ of  $\mathbb P(\Gamma,p)$. We can then express the divergence of each $\vect \psi_i^{(p)}(\vect r)$ as
\begin{equation}\label{eq:basisexpansion}
	\vect \nabla \cdot \vect \psi_i^{(p)}(\vect r) = \sum_{k=1}^{M_p} a_{ik} b_k(\vect r)
\end{equation}
with a proper set of coefficients $\left\{ a_{i,k} \right\}_{k=1}^{M_p}$ for each $\vect \psi_i^{(p)}(\vect r)$.
Since both $\mat \Sigma_p$ and $\widetilde{\mat \Sigma}_p$ are defined on the same GWP set 
$\vect \psi_i^{(p)}(\vect r)$ and assuming, without loss of generality, the same ordering of the basis functions in both cases, from \eqref{eq:basisexpansion} we have 
\begin{equation}
    \mat \Sigma_p = \mat A \mat L \text{ and } \widetilde{\mat \Sigma}_p = \mat A \widetilde{\mat L}
\end{equation} 
 with $\left[\mat A \right]_{ik} = a_{ik}$,	 $\left[\mat L \right]_{kj} = \mathcal L_j b_k$, and $\left[\widetilde{\mat L} \right]_{kj} = \widetilde{\mathcal L}_j b_k$.

Using the definition, linearity, and injectivity of $\mathcal L$, together with the completeness of the basis $\left\{ b_k \right\}_{k=1}^{M_p}$, we obtain
\begin{align}
	\mat L^T \vect x &= \vect 0 \nonumber
 \Rightarrow \sum_{k=1}^{M_p} \big( \mathcal L_m b_k \big) x_k = 0 ~~ \forall m\\
	&\Rightarrow \mathcal L_m \left( \sum_{k=1}^{M_p} x_k b_k \right) = 0 ~~ \forall m\\
	&\Rightarrow \sum_{k=1}^{M_p} x_k b_k = 0
	\Rightarrow \vect x = \vect 0,\nonumber
\end{align}
which, with linearity, establishes the invertibility of $\mat L$. The same strategy shows the invertibility of $\widetilde{\mat L}$.
We now have
\begin{align}
\mat A =  \mat A \Rightarrow \mat A \widetilde{\mat L}\widetilde{\mat L}^{-1} 
= \mat A \mat L \mat L^{-1}
\Rightarrow  \widetilde{\mat \Sigma}_p \widetilde{\mat L}^{-1}= 
 \mat \Sigma_p {\mat L}^{-1}
\end{align}
from which we obtain the looked for relationship
\begin{equation}
	\widetilde{\mat \Sigma}_p = \mat \Sigma_p  \mat M
\end{equation}
with $\mat M = \mat L^{-1} \widetilde{\mat L}$, an invertible matrix. 
\end{proof}
The  well-posedness of the definition of the projectors in \eqref{p-nsol-proj} and \eqref{p-sol-proj}) now follows. In particular
\begin{corollary}
The quasi-Helmholtz projectors are invariant on any specific choice of an high-order Star matrix satisfying \eqref{sigma-definition}.
\end{corollary}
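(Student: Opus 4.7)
The plan is to deduce the corollary almost immediately from the preceding Proposition, by relying on the standard interpretation of $\mat A(\mat A^T\mat A)^+\mat A^T$ as the orthogonal projector onto the column space of $\mat A$. No fresh algebraic manipulation of pseudoinverses is needed, which is fortunate because $(\mat M^T\mat \Sigma_p^T\mat \Sigma_p\mat M)^+$ does not factor cleanly through $\mat M^{-1}$ in general, even when $\mat M$ is invertible, since the Moore--Penrose pseudoinverse respects compositions only under restrictive hypotheses. The subspace viewpoint sidesteps this entirely.

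First, I would recall that for any real matrix $\mat A$, the expression $\mat A(\mat A^T\mat A)^+\mat A^T$ is precisely the orthogonal projector onto $\mathrm{range}(\mat A)$; in particular, it depends solely on that column space and not on the specific representative $\mat A$. I would then invoke the Proposition to write $\widetilde{\mat \Sigma}_p=\mat \Sigma_p\mat M$ for some invertible $\mat M$. Since right-multiplication by an invertible matrix leaves the column space unchanged, we get $\mathrm{range}(\widetilde{\mat \Sigma}_p)=\mathrm{range}(\mat \Sigma_p)$.

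From the uniqueness of the orthogonal projector onto a prescribed subspace, it then follows that $\widetilde{\mat P}_p^{\Sigma}=\mat P_p^{\Sigma}$. The invariance of the solenoidal projector comes for free from \eqref{p-sol-proj}: $\widetilde{\mat P}_p^{\Lambda H}=\mat I-\widetilde{\mat P}_p^{\Sigma}=\mat I-\mat P_p^{\Sigma}=\mat P_p^{\Lambda H}$.

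The only step that could conceivably require a line of justification is the reading of the pseudoinverse formula as the orthogonal projector onto $\mathrm{range}(\mat \Sigma_p)$; one simply verifies that the candidate matrix is symmetric, idempotent, and has the correct column space, which are the three conditions characterising that orthogonal projector uniquely. Beyond this observation, the corollary is pure bookkeeping on top of the Proposition, and I do not anticipate any genuine obstacle.
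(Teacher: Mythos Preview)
Your proposal is correct and is arguably cleaner than the paper's own argument. You identify $\mat \Sigma_p(\mat \Sigma_p^T\mat \Sigma_p)^+\mat \Sigma_p^T$ abstractly as the orthogonal projector onto $\mathrm{range}(\mat \Sigma_p)$ and conclude immediately from $\widetilde{\mat\Sigma}_p=\mat\Sigma_p\mat M$ (with $\mat M$ invertible) that the two column spaces, and hence the two orthogonal projectors, coincide. The paper instead carries out the computation explicitly: it takes the reduced SVD $\mat\Sigma_p=\mat U\mat S\mat V^T$, factors $\mat M^T\mat\Sigma_p^T\mat\Sigma_p\mat M=(\mat M^T\mat V\mat S^T)(\mat S\mat V^T\mat M)$ as a full-column-rank times full-row-rank product so that the pseudoinverse splits, and then verifies by direct multiplication that both $\mat P_p^{\widetilde{\Sigma}}$ and $\mat P_p^{\Sigma}$ collapse to $\mat U\mat U^T$. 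Your subspace viewpoint bypasses this algebra entirely and sidesteps the issue you flag about pseudoinverses not respecting composition in general; the paper's route, on the other hand, is self-contained in that it does not appeal to the projector characterization as a black box. In both approaches the invariance of $\mat P_p^{\Lambda H}$ then follows from \eqref{p-sol-proj} exactly as you indicate.
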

\begin{proof}
Given two matrices as above, we have from the previous proposition that
$\widetilde{\mat \Sigma}_p = \mat \Sigma_p  \mat M$ with $\mat M$ unique and invertible. 
First note that, letting
\begin{equation}
\mat \Sigma_p =\mat U \mat S \mat V^T
\end{equation}
be the reduced Singular Value Decomposition (SVD) of $\mat \Sigma$ (i.e. the SVD where $\mat U$ and $\mat V$ are vertically  rectangular and column full rank matrices), then
\begin{equation}
\mat M^T \mat \Sigma_p^T\mat\Sigma_p\mat M=\left(\mat M^T \mat V\mat S^T \right) \left( \mat S\mat V^T\mat M\right)
\end{equation}
where the left matrix in parenthesis is a full column rank matrix and the right matrix in parenthesis is a full row rank matrix. By using the properties of pseudoinverses \cite{ben2003generalized} we get
\begin{equation}
\left(\mat M^T \mat \Sigma_p^T\mat\Sigma_p\mat M\right)^+= \left( \mat S\mat V^T\mat M\right)^+\left(\mat M^T \mat V\mat S^T \right)^+
\end{equation}
from which it  follows that
\begin{align}
    \mat P_p^{\widetilde{\Sigma}} &= \widetilde{\mat \Sigma}_p \left(\widetilde{\mat \Sigma}_p^T \widetilde{\mat \Sigma}_p\right)^+\widetilde{ \mat \Sigma}_p^T\nonumber
    \\&=     \mat \Sigma_p  \mat M \left(  \mat M^T \mat \Sigma_p^T\mat \Sigma_p  \mat M\right)^+\mat M^T\mat \Sigma_p^T\\
    &=\mat U \mat S \mat V^T \mat M \left( \mat S\mat V^T\mat M\right)^+\left(\mat M^T \mat V\mat S^T \right)^+\mat M^T \mat V \mat S^T \mat U^T\nonumber\\
     &=\mat U\mat U^T=\mat \Sigma_p   \left(  \mat \Sigma_p^T\mat \Sigma_p \right)^+\mat \Sigma_p^T=
     \mat P_p^{\Sigma}\,.\nonumber
       \end{align}
The uniqueness of $\mat P_p^{{\Sigma}}$ combined with \eqref{p-sol-proj} directly implies that of $\mat P_p^{\Lambda H}$.
\end{proof}

\subsection{Projector Based Solution to the Low Frequency Breakdown}
To fix ideas and obtain an algorithm for the projectors, we propose to choose the following explicit definition of the Star matrix $\mat \Sigma_p$~\cite{merlini2019unified}
\begin{equation}\label{ho-charge-matrix}
    \left[ \mat \Sigma_p^T \right]_{mn} = \int_\Gamma \sigma_m^{(p)}(\vect r) \vect \nabla_\Gamma\cdot \vect \psi_n^{(p)}(\vect r) \diff S(\vect r)
\end{equation}
where the functions $\sigma_m^{(p)}$ are defined in \eqref{eq:sigmadef}. 
The reader should note that this definition results in a standard Star matrix in the zeroth-order ($p=0$) case and generalizes the concept for higher orders. Other choices however could have been made without modifying the final results, as proved above. 
Now, from \eqref{orthogonality}, the high-order quasi-Helmholtz projectors satisfy 
\begin{equation}\label{cancellation}
    \mat P_p^{\Lambda H} \mat T_h = \mat 0 \quad \text{and} \quad \mat T_h \mat P_p^{\Lambda H} = \mat 0,
\end{equation}
which generalizes the analogous property valid in the zeroth-order case \cite{Adrian2021}. This allows to proceed with the same formal strategy developed for zeroth-order projectors that here will work for the high-order EFIE. Define
 the preconditioning matrix
\begin{equation}
    \mat P  = j \sqrt{k / C} \, \mat P_p^\Sigma + \sqrt{C/k} \, \mat  P_p^{\Lambda H},
\end{equation}
with the scaling factor
\begin{equation}\label{scaling-constant}
    C = \sqrt{ \dfrac{ \|\mat T_h\|}{\| \mat  P_p^{\Lambda H} \mat T_s  \mat P_p^{\Lambda H} \|} }.
\end{equation}
The preconditioned system reads
\begin{equation}\label{eq:stableeq}
    \mat P \mat T  \mat P \vect y =  \mat P \vect e,
\end{equation}
with 
\begin{equation} 
    \vect j = \mat P \vect y.
\end{equation}
The proof of low-frequency well-conditioning and stability of \eqref{eq:stableeq} is formally identical to the one for the zeroth-order case \cite{Adrian2021} and we omit it here for the sake of conciseness.

\section{Implementation Related Details}\label{implementation}

\color{black}In this section, we deal with  implementation related details that could be useful to the reader while implementing all new techniques described here.

We assume that all matrix-vector products are done in a quasi-linear number of operations, using a compression approach~\cite{pan_efficient_2012, Rjasanow_2013} for the integral operators and a sparse algorithm for the Star matrices. Note that, to match the computational complexity of fast matrix-vector product algorithms, the naive computation of the norms in \eqref{scaling-constant} should be avoided and iterative approaches, such as power iterations \cite{power_iteration}, should be used instead.
Moreover, the numerical (pseudo-)inversion of $\mat \Sigma_p^T \mat \Sigma_p$ in \eqref{p-nsol-proj} has to be done iteratively whenever a product with the projectors is involved. As in the zeroth-order case, this matrix is similar to the one we get by discretizing the Laplacian equation with finite elements of order $p$. 
We can thus rely on algorithms tailored to solve such systems from other fields of applications~\cite{Helenbrook}. In particular, multigrid algorithms have proven to be quite effective. Because there exists several variants of multigrids, one could use the $p$-multigrid versions~\cite{Tielen2020, Helenbrook, Huismann2019} (tailored for high-order finite elements) but there are also robust implementations of the standard algebraic multigrid~\cite{Napov2014} generalized to different scenarios that are often easier to use in a black-box fashion.

Finally, to maximize the effect of our regularization to all frequencies, the computation of the projected EFIE has to be done carefully. For numerical purposes, the cancellations \eqref{cancellation} are to be enforced explicitly in the computation of the preconditioned matrix~\cite{Adrian2021}, which reads
\begin{equation}
\begin{aligned}
    \mat P \mat T  \mat P &= j C \, \mat P_p^{\Lambda H} \mat T_s \mat P_p^{\Lambda H} + j/C \, \mat T_h
    - k \, \mat P_p^{\Lambda H} \mat T_s \mat P_p^\Sigma \\
    &- k  \, \mat P_p^\Sigma \mat T_s \mat P_p^{\Lambda H} - j k^2/C \, \mat P_p^\Sigma \mat T_s \mat P_p^\Sigma.
\end{aligned}
\end{equation}
The loss of significant digits in the computation of the right-hand side also has to be avoided, as is well-known when dealing with very low-frequency numerical strategies \cite{Adrian2021}. 
In particular, the solenoidal and non-solenoidal contributions of the right-hand side must be computed separately, and the static part of the excitation source is subtracted when tested with the solenoidal functions~\cite{Chew2000, Adrian2021}.
More explicitly, we write
\begin{equation}
    \mat P \vect e = j \sqrt{k/C} \, \mat P_p^\Sigma \vect e + \sqrt{C/k} \, \mat P_p^{\Lambda H} \vect e_\mathrm{sub},
\end{equation}
where $\vect e_\mathrm{sub}$ is the subtracted righ-hand side \cite{Adrian2021}.
Finally, the post-processing computation of the electric field requires a similar treatment, by using separately the solenoidal and non-solenoidal contributions of the solution 
\begin{equation}
    \vect j_\mathrm{sol} =  \sqrt{C / k} \, \mat P_p^{\Lambda H} \vect y \quad \text{and} \quad
    \vect j_\mathrm{nsol} =  j \sqrt{k / C} \, \mat P_p^{\Sigma} \vect y
\end{equation}
and subtracting the static part of the Green's kernel when integrating with the solenoidal contribution~\cite{Adrian2021}.\color{black}

\section{Numerical validation}\label{numerical-results}

In this section, we give numerical results that validate the use of the quasi-Helmholtz projectors by comparing it against the standard EFIE and against a standard quasi-Helmholtz decomposition.
The  quasi-Helmholtz decomposition we adopted for comparison is a generalization of the Loop/Star technique (L/S-EFIE) from the zeroth-order: the matrix $\mat \Sigma_p$ is used for generating the Stars and the Loops are computed as described in~\cite{wildman_accurate_2004}.
The meshes are generated from the software Gmsh that provides quadratic (curvilinear) triangles \cite{gmsh}.
The singular integrals are computed using the singularity cancellation scheme described in \cite{sauter2010boundary} while near-singular and far interactions are computed with Gaussian quadratures.

Our first validation is done over the unitary sphere for different frequencies. 
Figure~\ref{condNum-sphere} shows the condition number of system matrix for each different method as a function of frequency and for different orders.
We observe that the condition number of the standard EFIE increases dramatically, as expected, while it remains constant for the standard Loop/Star technique and for the approach proposed here based on high-order quasi-Helmholtz projectors. For the latter case, however, the condition number is lower.
This effect can be better understood from the test in Figure~\ref{condinfuncofh} where  the condition number is shown in function of the inverse mesh size. It is clear that, differently from standard Loop/Star, the new projectors do not worsen the spectral behavior of the original EFIE generalizing what happens in the zeroth-order case~\cite{Adrian2021}.
The  radar cross sections (RCS)  at $100$Hz  and at $3 \times 10^8$Hz are obtained in  Figures~\ref{RCS-sphere_100Hz} and~\ref{RCS-sphere_3e8Hz}, respectively. This shows that the low frequency-breakdown, absent at higher frequencies, is instead corrupting the results at low-frequency when no low-frequency treatment is employed. At the same time, these results validate the stability of our scheme in a wide frequency range.
\begin{figure*}
\begin{minipage}[ht]{\linewidth}
      \centering
      \begin{minipage}{0.49\linewidth}
          \begin{figure}[H]
          \centering
              \resizebox{0.95\columnwidth}{!}{%
    \includegraphics{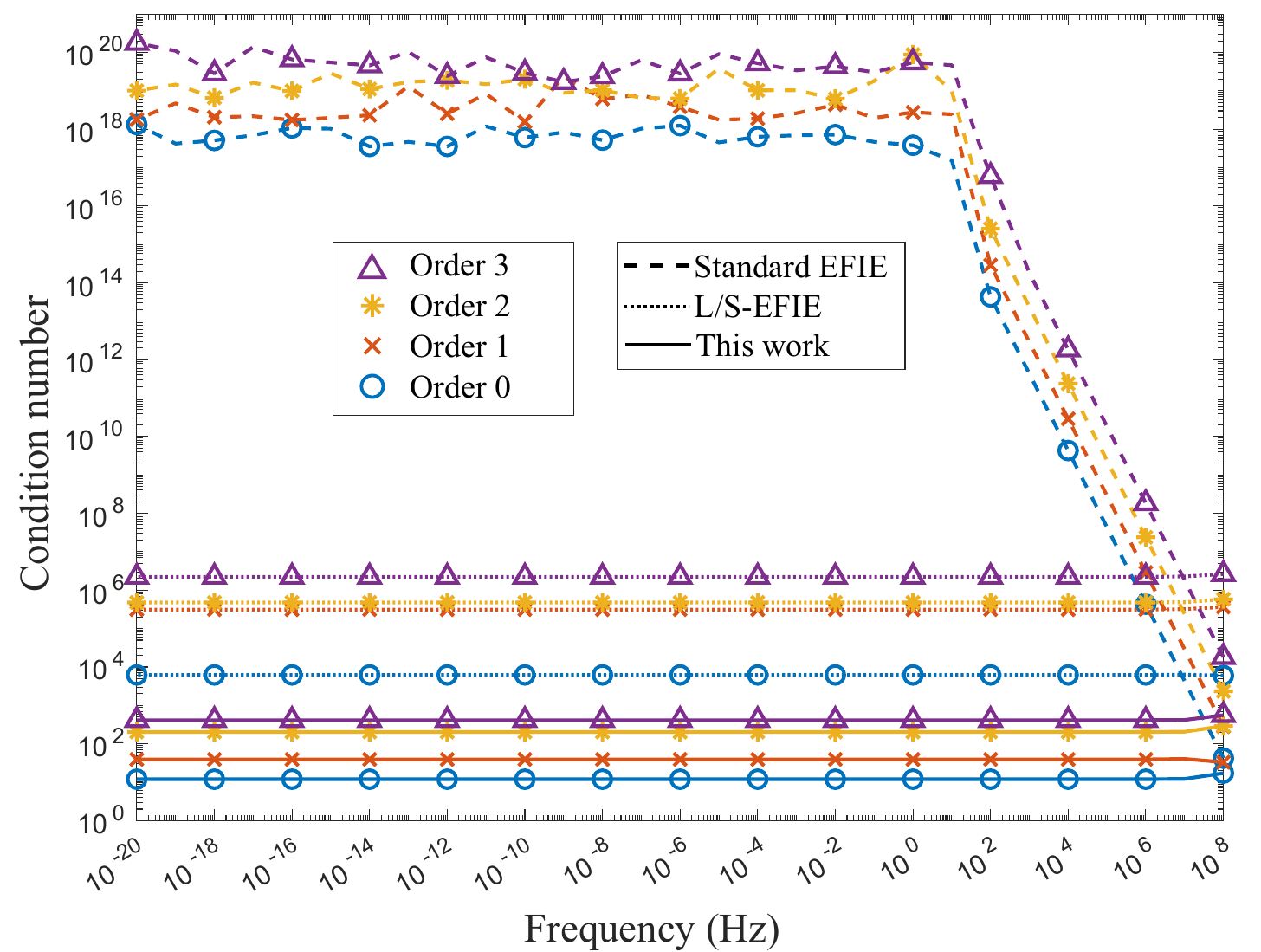}
    }\caption{Condition number of the system matrices in function of the frequency for order zero to three.}\label{condNum-sphere}
          \end{figure}
      \end{minipage}
      \hfill
      \begin{minipage}[h]{0.49\linewidth}
          \begin{figure}[H]
          \centering
              \resizebox{0.95\columnwidth}{!}{%
    \includegraphics{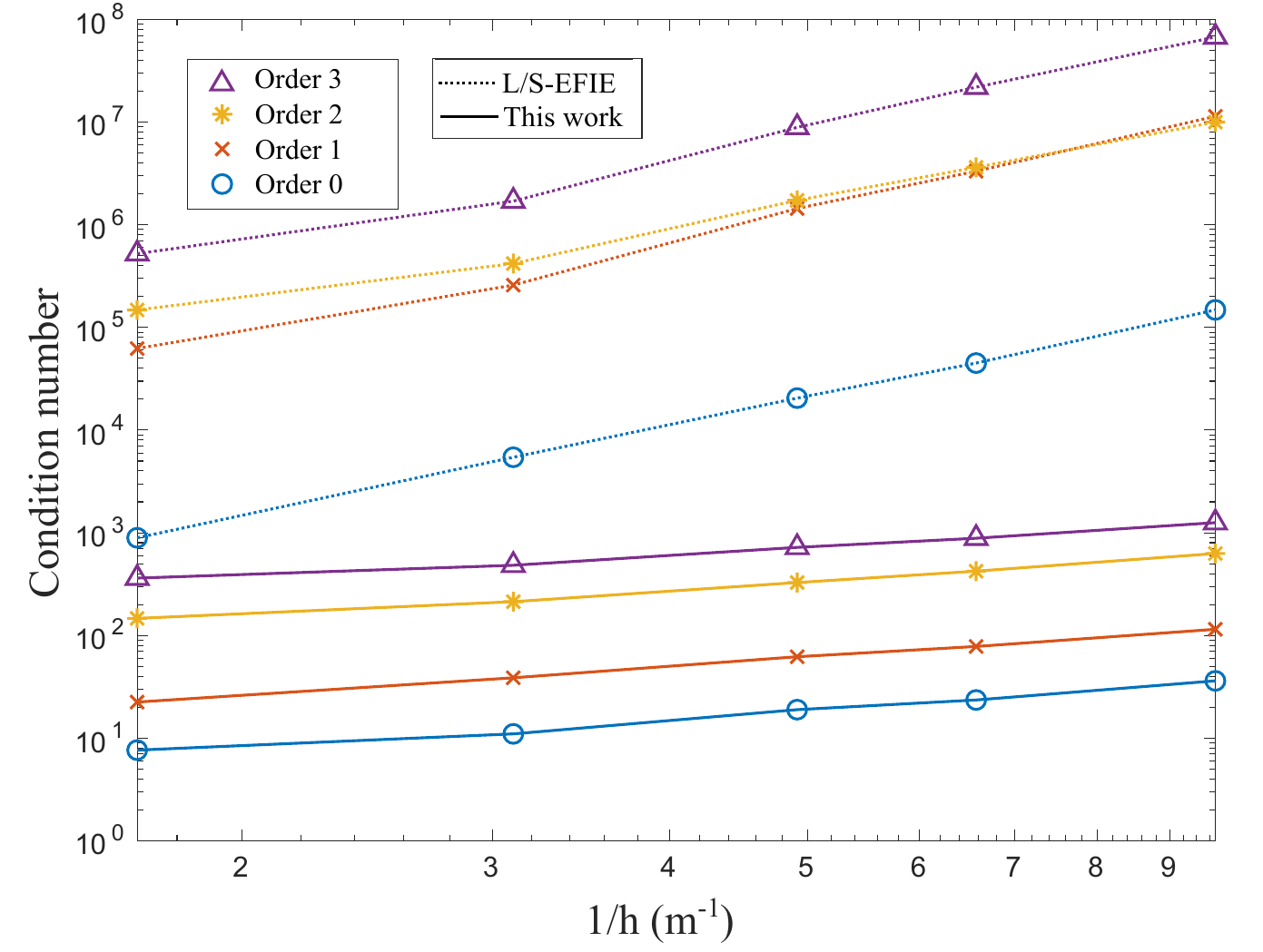}
    }\caption{Condition number of the system matrices in function of the inverse of the average cell diameter $h$; frequency of 1Hz.}\label{condinfuncofh}
          \end{figure}
      \end{minipage}
  \end{minipage}
\end{figure*}

\begin{figure*}
\begin{minipage}[ht]{\linewidth}
      \centering
      \begin{minipage}{0.49\linewidth}
          \begin{figure}[H]
          \centering
             \resizebox{0.85\columnwidth}{!}{
    \includegraphics{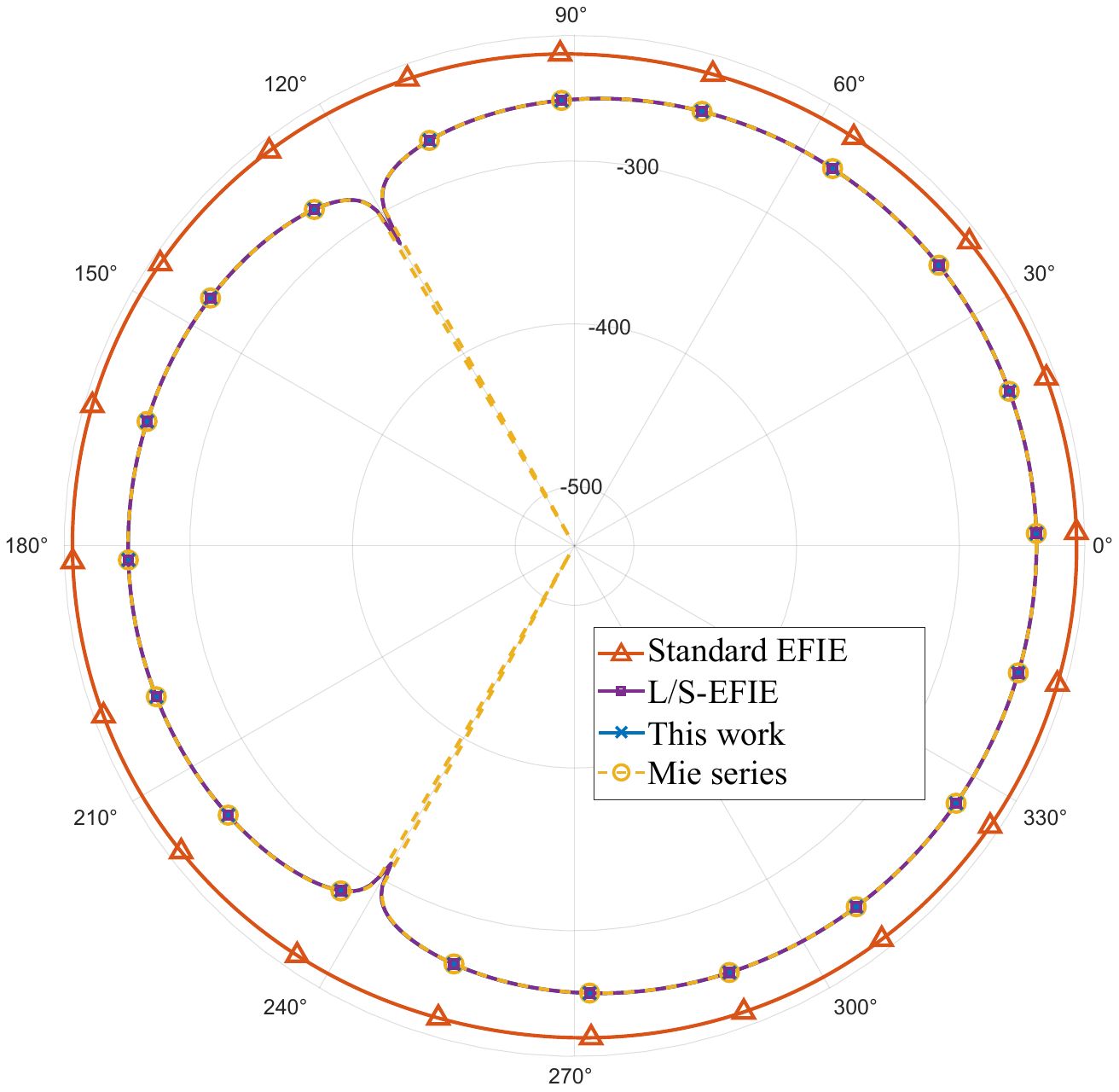}
    }\caption{Radar cross section (RCS) in dBsm at 100Hz.}\label{RCS-sphere_100Hz}
          \end{figure}
      \end{minipage}
      \hfill
      \begin{minipage}[h]{0.49\linewidth}
          \begin{figure}[H]
          \centering
              \resizebox{0.85\columnwidth}{!}{%
    \includegraphics{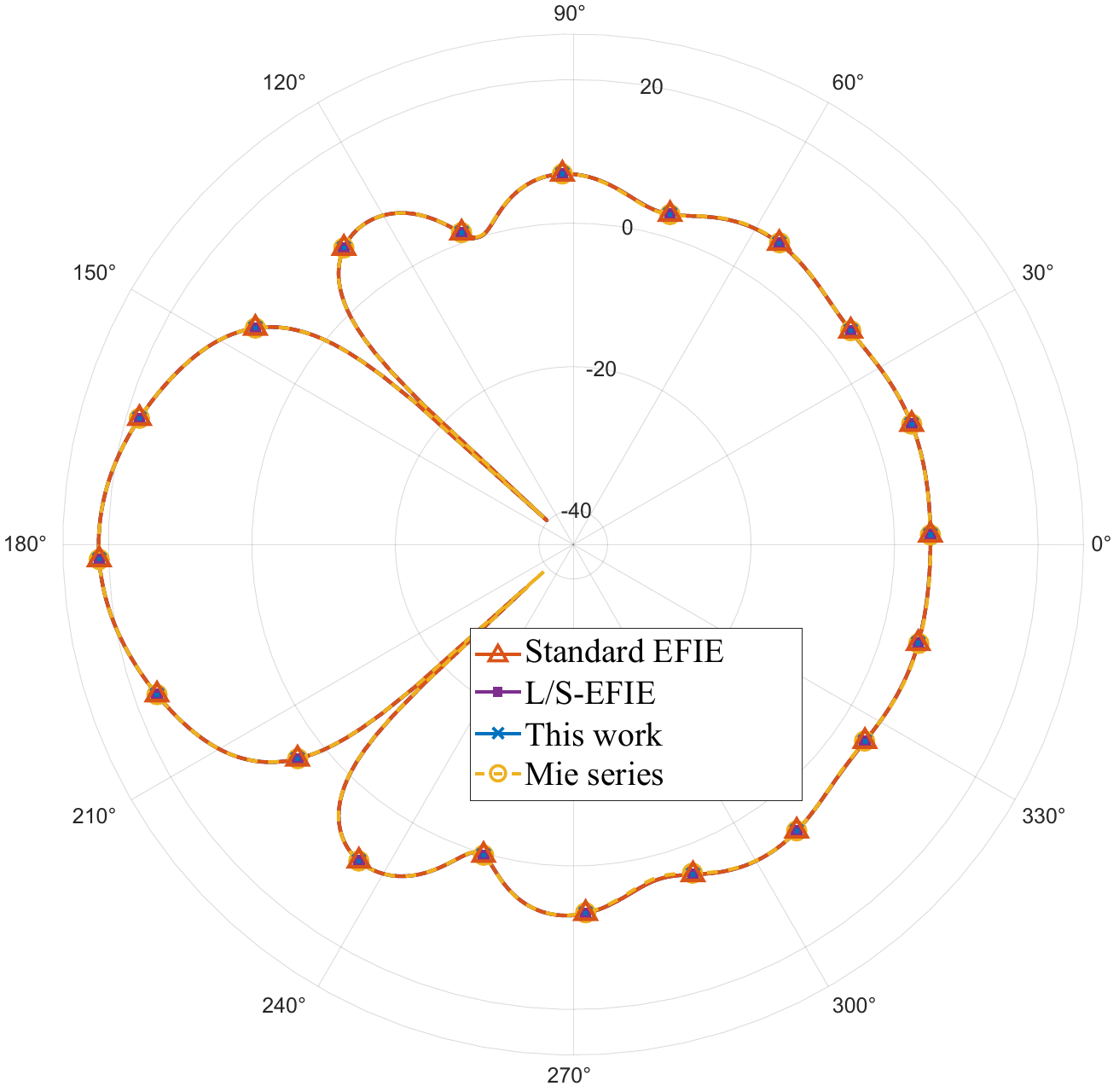}
    }\caption{Radar cross section (RCS) in dBsm at $3 \times 10^8$Hz.}\label{RCS-sphere_3e8Hz}
          \end{figure}
      \end{minipage}
  \end{minipage}
\end{figure*}

To test the performance of our scheme on a non-simply connected geometry we used the ``two interlocked Möbius ring'' structure, that form two separate objects with interlaced holes and handles.
The rings are of radius $1$m and are discretized with 3780 cells and 5670 edges.
In total, there are 72 global  Loops associated with this topology.
Figure~\ref{Mobius} shows the surface density current obtained by solving the EFIE with and without preconditioning at $10$Hz using basis functions of order two (39690 unknowns). 
We get, without the need of detecting the global Loops, the same results with the quasi-Helmholtz projectors and the Loop/Star-EFIE for which however the global Loops must be explicitly detected. It should also be noted that the accuracy is completely lost without preconditioning. 

\begin{figure*}[ht]
    \centering
    \resizebox{0.88\textwidth}{!}{\includegraphics{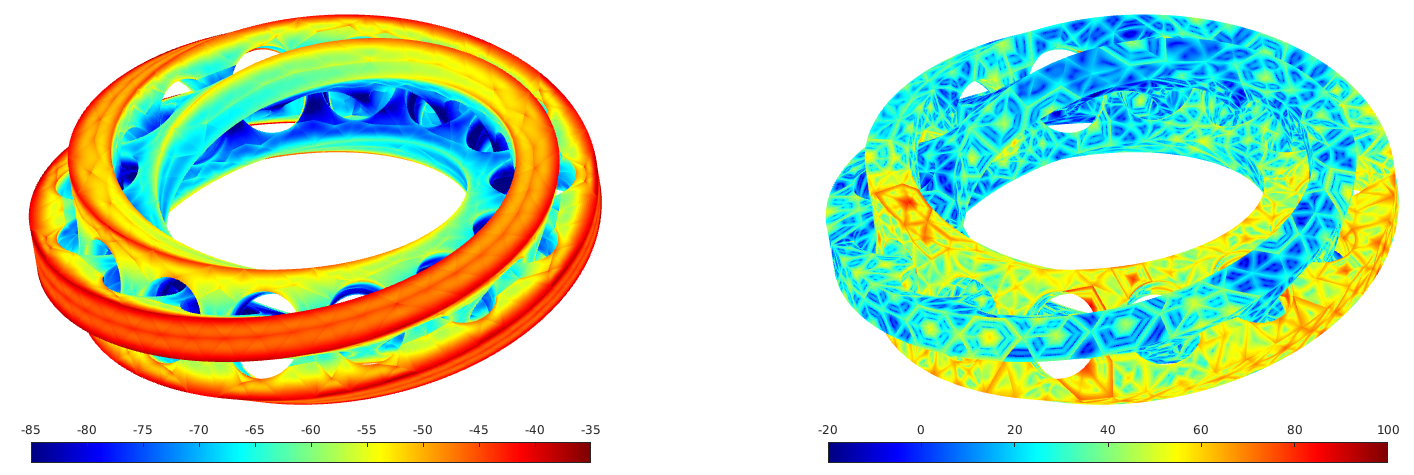}
    }
    \caption{Absolute value of the surface density current (in $\text{dB}_{\text{A/m}^2}$) for the interlocked Möbius ring irradiated by a plane wave at $10$Hz, with preconditioning (up) and without (down).}
    \label{Mobius}
\end{figure*}

Our final validation test scenario is on the model of an airliner with 2D apertures corresponding to the windows and the shell of the jet engine. The mesh contains 3942 cells and 5750 internal edges, and we use basis functions of order two (42900 unknowns).
In addition to show the relevance of our method to solve industrial scenarios, this example completes the numerical study with problems containing global Loops around apertures, which are of different kind than the harmonic Loops of the previous example. 
Figure~\ref{airliner} shows the surface density current obtained by solving the EFIE with and without preconditioning at $1$Hz. 
As for the previous case, the quasi-Helmholtz projectors give an identical solution as the one obtained with Loop-Star decompositions (which however requires Loop detection), while the solution without preconditioning is completely jeopardized.

\begin{figure*}
    \centering
    \resizebox{0.89\textwidth}{!}{\includegraphics{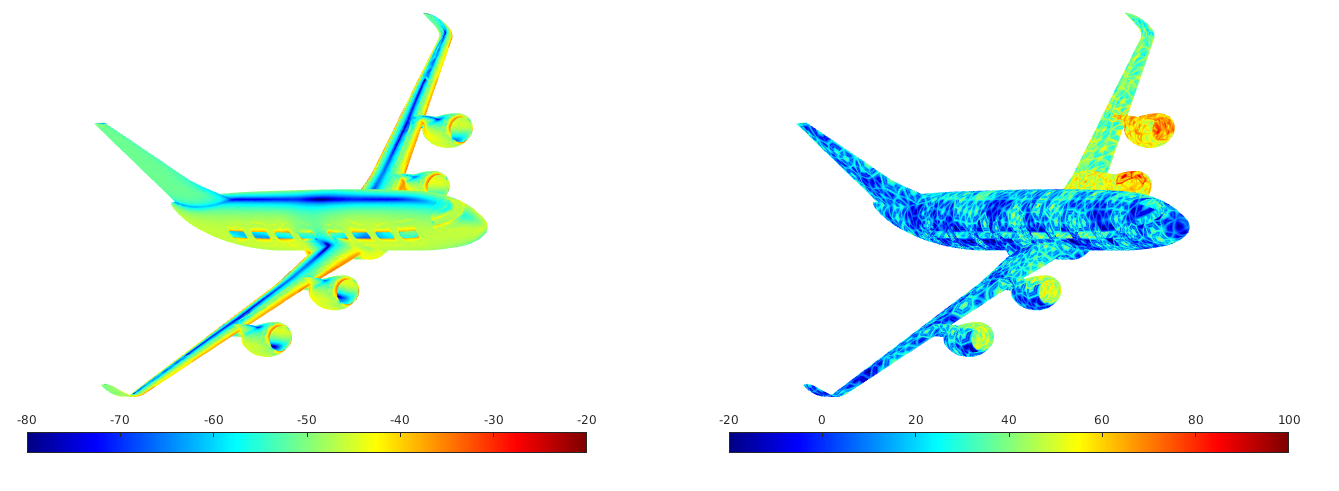}
    }
    \caption{Absolute value of the surface density current (in $\text{dB}_{\text{A/m}^2}$) for the airliner irradiated by a plane wave at $1$Hz, with preconditioning (left) and without (right).}
    \label{airliner}
\end{figure*}

\section{Conclusion}

In this work, we have extended the use of the quasi-Helmholtz projectors for stabilizing the EFIE when discretized with high-order basis functions. The new scheme is based on a generalized definition of Star matrix that is wide enough to encompass numerous relevant scenarios, including the use of GWP basis elements. The contribution has shown that this results into unique quasi-Helmholtz projectors irrespectively of the specific choice of Star matrix.
Numerical results has shown the effectiveness of the new approach.

\section*{Acknowledgment}

The work of this paper has received funding from the EU
H2020 research and innovation programme under the Marie
Skłodowska-Curie grant agreement n° 955476 (project COM-
PETE), from the European Research Council (ERC) under
the European Union’s Horizon 2020 research and innovation
programme (grant agreement n° 724846, project 321).

\ifCLASSOPTIONcaptionsoff
  \newpage
\fi

\bibliographystyle{IEEEtran}
\bibliography{references}

\end{document}